\newcommand{\C}{\mathbb{C}}
\newcommand{\N}{\mathbb{N}}
\newcommand{\rank}{{\operatorname{rank}}}
\newcommand{\Ker}{{\operatorname{Ker}}}
\newcommand{\Image}{{\operatorname{Im}}}
\newcommand{\pmt}[1]{\begin{pmatrix}#1\end{pmatrix}}
\newcommand{\bmt}[1]{\begin{bmatrix}#1\end{bmatrix}}
\newtheorem{pro}{Proposition}[section]
\newtheorem{Lem}[pro]{Lemma}
\newtheorem{Theo}[pro]{Theorem}
\newtheorem*{Theoetoile}{Theorem} 
\theoremstyle{definition}
\newtheorem{Defi}[pro]{Definition}
\newtheorem{Exem}[pro]{Example}
\numberwithin{equation}{section}
\title[A new proof of Stanley's theorem on the SLP]{A new proof of Stanley's theorem on the strong Lefschetz property}
\author{Ho V. N. Phuong}
\address{University of Sciences, Hue University,  77 Nguyen Hue St., Hue, Vietnam}
\email{hvnphuong@husc.edu.vn}
\author{Quang Hoa Tran}
\address{University of Education, Hue University, 34 Le Loi St., Hue City, Viet Nam}
\email{tranquanghoa@hueuni.edu.vn}
\subjclass[2020]{Primary 13C40; Secondary 13E10, 14M10}
\keywords{Artinian algebras, complete intersections,  Stanley's theorem, Strong Lefschetz property}
\begin{document}
\maketitle

\begin{abstract}
A standard graded artinian monomial complete intersection algebra $A=\Bbbk[x_1,x_2,\ldots,x_n]/(x_1^{a_1},x_2^{a_2},\ldots,x_n^{a_n})$, with $\Bbbk$ a field of characteristic zero, has the strong Lefschetz property due to Stanley in 1980. In this paper, we give a new proof for this result by using only the basic properties of linear algebra. Furthermore, our proof is still true in the case where the characteristic of $\Bbbk$ is  greater than the socle degree of $A$, namely $a_1+a_2+\cdots+a_n - n$.
\end{abstract}

\section{Introduction}
Let $\Bbbk$ be a field and $R=\Bbbk[x_1,x_2,\ldots,x_n]$ be a standard graded polynomial ring over $\Bbbk$ in $n$ variables. A graded artinian $\Bbbk$-algebra $A=R/I$ is said to have the \textit{strong Lefschetz property}  if there is a linear form $\ell\in A_1$ such that the multiplication
$$\times \ell^s: A_{i}\longrightarrow A_{i+s}$$
has maximal rank for all $s$ and all $i$, i.e., $\times \ell^s$ is either injective or surjective, for all $s$ and all $i$. Such linear form $\ell$ is called a \textit{strong Lefschetz element} of $A.$

Three papers represent the beginning of the study of what is presently called \textit{Lefschetz properties} that were written by R. P. Stanley \cite{Stanley1980} in 1980; by J. Watanabe \cite{Watanabe1987} in 1987; and by L. Reid, L. G. Roberts and M. Roitman \cite{RRR1991} in 1991. These papers proved essentially the following same result.

\begin{Theoetoile}[Stanley's theorem]
If $\Bbbk $ is a field of characteristic zero and $R=\Bbbk[x_1,x_2,\ldots,x_n]$, then every artinian monomial complete intersection algebra
$$A=R/(x_1^{a_1},x_2^{a_2},\ldots,x_n^{a_n})$$
 has the strong Lefschetz property with $x_1+x_2+\cdots+x_n$ as a strong Lefschetz element.
\end{Theoetoile}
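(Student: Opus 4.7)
The plan is to argue by induction on the number of variables $n$. The base case $n=1$ is immediate, since $A = \Bbbk[x]/(x^{a_1})$ is a single Jordan chain under multiplication by $x$, so every $\times x^s$ has maximal rank. For the inductive step I would use the tensor decomposition $A = A' \otimes_{\Bbbk} B$, with $A' = \Bbbk[x_1,\ldots,x_{n-1}]/(x_1^{a_1},\ldots,x_{n-1}^{a_{n-1}})$ and $B = \Bbbk[x_n]/(x_n^{a_n})$. Multiplication by $\ell = (x_1+\cdots+x_{n-1}) + x_n$ on $A$ decomposes as $L' \otimes 1 + 1 \otimes L_n$, the sum of two commuting nilpotents, each of which is a strong Lefschetz operator on its own factor (by induction on $A'$, and trivially on $B$). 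The problem therefore reduces to a linear-algebra claim: if $(U,\ell_U)$ and $(V,\ell_V)$ are standard graded artinian $\Bbbk$-algebras with symmetric Hilbert functions, each equipped with a strong Lefschetz element, then $U\otimes_{\Bbbk} V$ satisfies the SLP with element $\ell_U \otimes 1 + 1 \otimes \ell_V$.

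To handle this reduction I would invoke the standard equivalence, valid for any graded artinian algebra with symmetric Hilbert function: the SLP is equivalent to each central map $\times \ell^{d-2i} \colon A_i \to A_{d-i}$ being an isomorphism (where $d$ is the socle degree), and equivalently to multiplication by $\ell$ having the unique symmetric Jordan type compatible with the Hilbert function. Using the SLP of $A'$ to decompose it into graded Jordan chains under $L'$ (a graded decomposition is available precisely because $L'$ is a strong Lefschetz element of degree one), I reduce to analyzing the two-factor case $\Bbbk[y]/(y^p) \otimes \Bbbk[x_n]/(x_n^{a_n})$ equipped with the operator $y\cdot{} + x_n\cdot{}$. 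A direct binomial computation shows that the chain generated by $1$ already has the maximal length $p + a_n - 1$; one then exhibits an explicit basis realizing the expected Clebsch--Gordan decomposition into Jordan chains of sizes $p + a_n - 1,\, p + a_n - 3,\,\ldots,\, |p - a_n| + 1$.

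The main obstacle is carrying out this last step by elementary linear algebra only, without appealing to $\mathfrak{sl}_2$-representation theory. A natural construction produces the minimal-degree generator of each new chain by iterating a carefully chosen descent operator on $U \otimes V$, and the resulting change of basis involves rational coefficients whose denominators are products of positive integers bounded by the socle degree of $A$. This is exactly why the hypothesis $\mathrm{char}\,\Bbbk = 0$, or more generally $\mathrm{char}\,\Bbbk > \sum_{i=1}^n (a_i - 1)$, both arises naturally and is sufficient: it guarantees that every denominator that appears during the induction is invertible, so the chain decomposition persists at every stage.
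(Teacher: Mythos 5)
Your route is genuinely different from the paper's. You follow the tensor-product/Clebsch--Gordan strategy (essentially Watanabe's $\mathfrak{sl}_2$ proof recast in elementary terms): induct on $n$, write $A = A'\otimes_\Bbbk \Bbbk[x_n]/(x_n^{a_n})$, split $A'$ into graded Jordan chains using the inductive SLP, and reduce to showing that $\Bbbk[y]/(y^p)\otimes\Bbbk[x]/(x^q)$ with the operator $y+x$ has Jordan type $(p+q-1,\,p+q-3,\,\ldots,\,|p-q|+1)$. The paper goes the other way around: it first proves the SLP only for the exponent-two algebra $B=R/(x_1^2,\ldots,x_n^2)$, by peeling off one variable at a time and using the $2\times 2$ block structure $M_i^t=\bigl[\begin{smallmatrix}\overline{M}_i^t & 0\\ a_nt\overline{M}_i^{t-1} & \overline{M}_{i-1}^t\end{smallmatrix}\bigr]$ together with the rank identity of Lemma~\ref{Lemma3.5}, so that the whole induction collapses to $\rank(M_i^{n-2i})=\binom{n-1}{i}+\binom{n-1}{i-1}$; it then embeds the general $A$ into such a $B$ of the same socle degree via $y_j\mapsto x_{\alpha_{j-1}+1}+\cdots+x_{\alpha_j}$ and transfers injectivity of the central maps through the commutative diagram. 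The paper's detour through the quadratic case buys exactly what your approach still owes: it never has to exhibit a full Jordan chain decomposition of a tensor product.

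That is also where the genuine gap in your proposal sits. The step you yourself flag as ``the main obstacle'' --- producing, by elementary linear algebra, an explicit graded basis of $\Bbbk[y]/(y^p)\otimes\Bbbk[x]/(x^q)$ realizing the chains of lengths $p+q-1, p+q-3,\ldots,|p-q|+1$, with control on the denominators --- is not a routine verification; it is the entire content of the theorem in the two-variable case, and the ``carefully chosen descent operator'' you invoke is precisely the lowering operator of the $\mathfrak{sl}_2$-action you are trying to avoid. Without writing down that operator and verifying (i) that the vectors it produces are the minimal-degree generators of genuinely new chains, (ii) that the resulting vectors are linearly independent and span, and (iii) that every denominator is a product of integers at most the socle degree, the inductive step is not established. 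Note also that checking only that the chain generated by $1\otimes 1$ has maximal length $p+q-1$ (the binomial coefficient $\binom{p+q-2}{p-1}\neq 0$) does not by itself determine the Jordan type of the remaining summand. The claim you need is true under your stated hypothesis on the characteristic, so the architecture is sound, but as written the proof of its key lemma is deferred rather than given; by contrast, the paper's Lemma~\ref{Lemma3.5} reduces the corresponding step to a single computation with block matrices.
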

In the case $\Bbbk=\C$, Stanley used the fact that $A$ is isomorphic to the cohomology ring of a product of projective spaces and he applied the Hard Lefschetz Theorem to conclude that the divisor lattice of monomials in $A$ has the Sperner property. Watanabe proved the result by using the theory of modules over special linear Lie algebra $\mathfrak{sl}_2$, and Reid, Roberts and Roitman used Hilbert function techniques from commutative algebra in order to obtain this theorem.
The conclusion in these three papers as well as the techniques of proof are of interest in algebraic geometry, combinatorics, representation theory, and commutative algebra. 

In this paper, we will give a new proof of this theorem by  using only the basic properties of linear algebra. Furthermore, our proof is true not only for the case where the characteristic of $\Bbbk$ is zero, but also for the case where the characteristic is large enough, see Theorem~\ref{Theorem3.1}. More precisely, firstly let $I=(x_1^{2},x_2^{2},\ldots,x_n^{2})$ be an artinian quadratic monomial complete intersection ideal of $R$.  We see that the set of all square-free monomials of degree $i$ forms a $\Bbbk$-basis of  $\Bbbk$-vector space $B_i$, for all $i$, where $B=R/I$. Based on these bases, we construct the matrix of the multiplications  $\times (x_1+x_2+\cdots+x_n)^t: B_i\longrightarrow B_{i+t}$ for all $0\leq i\leq n$ and $0\leq t\leq n-i$ and show that these multiplications have maximal rank.  The main result is the following.

\begin{Theoetoile}[Theorem~\ref{Theorem3.7}]
Assume $\Bbbk$ is of characteristic zero or greater than $n$. Then the algebra $B=R/(x_1^2,x_2^2,\ldots,x_n^2)$ has the strong Lefschetz property  with $x_1+x_2+\cdots+x_n$ as a strong Lefschetz element.
\end{Theoetoile}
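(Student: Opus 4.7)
My plan is to prove Theorem~\ref{Theorem3.7} by induction on $n$. Because the Hilbert function $\dim_{\Bbbk}B_i=\binom{n}{i}$ is symmetric (so $B$ is Gorenstein), the SLP with respect to $\ell=x_1+\cdots+x_n$ is equivalent to the bijectivity of the middle multiplication maps $\times\ell^{n-2i}:B_i\to B_{n-i}$ for $0\le i\le n/2$: injectivity of $\times\ell^t$ in the sub-middle range follows by factoring $\ell^{n-2i}$ through $\ell^t$, while the surjectivity cases reduce to injectivity cases via the Gorenstein duality pairing on $B$. I therefore focus on proving these middle bijections.

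For the inductive step I would use the $\Bbbk$-vector-space decomposition $B_n=B_{n-1}\oplus B_{n-1}\,x_n$: every homogeneous element of $B_{n,i}$ is uniquely $a+b\,x_n$ with $a\in B_{n-1,i}$ and $b\in B_{n-1,i-1}$. Because $x_n^2=0$, one has $\ell_n^t=\ell_{n-1}^t+t\,\ell_{n-1}^{t-1}x_n$, and a direct computation gives
\[
\ell_n^t(a+b\,x_n)=\ell_{n-1}^t a+\bigl(t\,\ell_{n-1}^{t-1}a+\ell_{n-1}^t b\bigr)x_n.
\]
Writing $L^t_j$ for the matrix of $\times\ell_{n-1}^t:B_{n-1,j}\to B_{n-1,j+t}$, the multiplication $\times\ell_n^t:B_{n,i}\to B_{n,i+t}$ is represented by the block-triangular matrix
\[
M=\begin{pmatrix} L^t_i & 0 \\[2pt] t\,L^{t-1}_i & L^t_{i-1}\end{pmatrix}.
\]
In the middle case $t=n-2i$ with $0\le i<n/2$ (the case $i=n/2$ being trivial), the source and target of $M$ both have dimension $\binom{n}{i}$, so it suffices to show $M$ is injective.

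The induction hypothesis (SLP of $B_{n-1}$) tells me the behaviour of each block: $L^{n-2i}_i$ is surjective, $L^{n-2i}_{i-1}$ is injective, and the maps $L^{n-2i-1}_i:B_{n-1,i}\to B_{n-1,n-i-1}$ and $L^{n-2i+1}_{i-1}:B_{n-1,i-1}\to B_{n-1,n-i}$ are the middle bijections of $B_{n-1}$ at degrees $i$ and $i-1$ respectively. If $M\bigl(\begin{smallmatrix}a\\ b\end{smallmatrix}\bigr)=0$, then $a\in\ker L^{n-2i}_i$ and $L^{n-2i}_{i-1}b=-(n-2i)\,L^{n-2i-1}_i\,a$. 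Here the hypothesis $\operatorname{char}\Bbbk=0$ or $>n$ enters, guaranteeing that the scalar $n-2i$ is invertible; combined with the injectivity of $L^{n-2i}_{i-1}$, the vector $b$ is determined by $a$, and it remains to force $a=0$.

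The main obstacle is exactly this closing step: at $t=n-2i$ neither diagonal block of $M$ is itself invertible, so one must use both middle bijections supplied by induction to control the interaction between $\ker L^{n-2i}_i$ and $\Image L^{n-2i}_{i-1}$. Concretely, writing $\ell_{n-1}^{n-2i-1}a=\ell_{n-1}^{n-2i}c=\ell_{n-1}^{n-2i-1}(\ell_{n-1}c)$ for some $c\in B_{n-1,i-1}$, the injectivity of the middle map $L^{n-2i-1}_i$ forces $a=\ell_{n-1}c$. Then $0=\ell_{n-1}^{n-2i}a=\ell_{n-1}^{n-2i+1}c=L^{n-2i+1}_{i-1}c$, and injectivity of the other middle map $L^{n-2i+1}_{i-1}$ forces $c=0$, hence $a=0$. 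This completes the inductive step; the base case $n=1$ is immediate.
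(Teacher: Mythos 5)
Your proposal is correct and follows essentially the same route as the paper: induction on $n$, the same splitting $B_{n,i}=B_{n-1,i}\oplus B_{n-1,i-1}x_n$ giving the block-triangular matrix of Lemma~\ref{Lemma3.4}, and the same two key inputs, namely the invertibility of $n-2i$ and the two middle bijections $\times\ell_{n-1}^{\,n-2i-1}$ and $\times\ell_{n-1}^{\,n-2i+1}$ of the $(n-1)$-variable algebra supplied by the inductive hypothesis. The only difference is cosmetic: where you run an element-wise kernel chase (producing $c=-b/(n-2i)$ and killing it with the two middle bijections), the paper factors the diagonal blocks through the invertible matrix $\overline{M}_i^{\,n-2i-1}$ and applies a block-rank lemma (Lemma~\ref{Lemma3.5}) to conclude $\rank\bigl(M_i^{n-2i}\bigr)=\binom{n-1}{i}+\binom{n-1}{i-1}=\binom{n}{i}$.
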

Then we show that any artinian monomial complete intersection $A$ can be viewed as a sub-algebra of an artinian quadratic monomial complete intersection $B$ such that $A$ and $B$ have the same socle degree. By applying Theorem~\ref{Theorem3.7}, we get that $A$ has also the strong Lefschetz property, namely Stanley's theorem is proved.
Furthermore,  our proof is still true in the case where the characteristic of $\Bbbk$ is greater than the socle degree of $A$, namely $a_1+a_2+\cdots+a_n - n$, see Theorem~\ref{Theorem3.1}.

\section{Artinian quadratic monomial complete intersection algebras}
\begin{Defi}
For any standard graded artinian  $\Bbbk$-algebra $A=R/I=\bigoplus_{i=0}^d A_i$, the \textit{Hilbert function} of $A$ is the function
$$h_A: \N\longrightarrow \N$$
defined by $h_A(t)=\dim_\Bbbk A_t$. As $A$ is artinian, its Hilbert function is equal to its \textit{$h$-vector} that one can express as a finite sequence
$$\underline{h}_A=\left(h_0,h_1,h_2,\ldots, h_d\right),$$
with $h_i=h_A(i)>0$ and $d$ is the last index with this property. The integer $d$ is called the \textit{socle degree} of $A$.
	
The $h$-vector  $\underline{h}_A$ is said to be \textit{unimodal} if there exists an integer $t\geq 1$ such that $h_0\leq h_1 \leq h_2\leq \cdots \leq h_t\geq h_{t+1}\geq \cdots\geq h_d.$ The $h$-vector  $\underline{h}_A$ is said to be \textit{symmetric} if $h_{d-i}=h_i$ for every $i=0,1,\ldots,\lfloor\frac{d}{2}\rfloor.$
\end{Defi}

In this section, we consider the case where $I=(x_1^2,x_2^2,\ldots,x_n^2)$ is an artinian quadratic monomial complete intersection ideal  in $R=\Bbbk[x_1,x_2,\ldots,x_n]$. Now set  $A=R/I=\bigoplus_{j= 0}^n A_j$. Hence $A=R/I$ is an artinian complete intersection of the socle degree $n$, namely $h_A(j)=0$  for all $j>n$ and moreover
$$ h_A(j)=h_A(n-j)=\binom{n}{j}$$
for all $j=0,1,\ldots, n$. The $h$-vector of $A$ is
$$\left(1,n, \binom{n}{2},\binom{n}{3},\ldots,\binom{n}{3}, \binom{n}{2},n,1\right).$$
In particular, the $h$-vector of $A$ is unimodal and symmetric.
Furthermore, we have the following.
\begin{Lem}\label{Lemma3.2}
The set of all square-free monomials forms a $\Bbbk$-basis of $A$.
\end{Lem}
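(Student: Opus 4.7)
The plan is to prove both spanning and linear independence separately, exploiting the fact that $I=(x_1^2,\dots,x_n^2)$ is a monomial ideal.

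First I would observe that the set of all monomials $x^{\alpha}=x_1^{\alpha_1}\cdots x_n^{\alpha_n}$ with $\alpha\in\N^n$ is a $\Bbbk$-basis of $R$. Partition this basis into two disjoint sets: the monomials with $\alpha_i\geq 2$ for some $i$, and the square-free monomials (those with $\alpha_i\in\{0,1\}$ for all $i$). For spanning, any monomial in the first set lies in $I$, since $x^{\alpha}=x_i^2\cdot x_1^{\alpha_1}\cdots x_i^{\alpha_i-2}\cdots x_n^{\alpha_n}\in I$; hence its image in $A$ is zero. Therefore the images of the square-free monomials already span $A$.

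For linear independence, the key fact is that a monomial ideal $I$ has a $\Bbbk$-basis consisting exactly of the monomials belonging to $I$, i.e.\ those divisible by at least one of its monomial generators $x_1^2,\dots,x_n^2$. Equivalently, a polynomial lies in $I$ if and only if each of its monomial terms is divisible by some $x_i^2$. Now suppose $\sum_{S\subseteq\{1,\dots,n\}}c_S\prod_{i\in S}x_i\in I$ with $c_S\in\Bbbk$; then each square-free monomial $\prod_{i\in S}x_i$ on the left must itself be divisible by some $x_j^2$, which is impossible unless $c_S=0$. Thus the square-free monomials are linearly independent modulo $I$, and together with the previous step this shows they form a $\Bbbk$-basis of $A$.

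As a sanity check, this agrees with the dimension count already established: there are exactly $\binom{n}{j}$ square-free monomials of degree $j$, matching the Hilbert function $h_A(j)=\binom{n}{j}$, and the total count $2^n=\sum_{j=0}^n\binom{n}{j}$ is $\dim_{\Bbbk}A$. There is essentially no obstacle here; the only point that needs care is invoking (or briefly justifying) the standard fact that a monomial ideal is $\Bbbk$-spanned by the monomials it contains, which is what separates monomial ideals from arbitrary ideals and makes the argument work.
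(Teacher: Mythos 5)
Your proposal is correct and complete; the paper states Lemma~\ref{Lemma3.2} without proof, treating it as a standard fact, and your argument is precisely the standard justification: the square-free monomials span $A$ because every non-square-free monomial lies in $I$, and they are independent modulo $I$ because a polynomial belongs to a monomial ideal only if each of its terms does. The dimension check against $h_A(j)=\binom{n}{j}$ is a nice confirmation but not logically needed.
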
 

We denote by $\mathfrak{B}$ the set of all square-free monomials in $R$ and by $\mathfrak{B}_t$ the set of all square-free monomials of degree $t$ in $R.$ By Lemma~\ref{Lemma3.2}, $\mathfrak{B}$ is a $\Bbbk$-basis of $A$ and $\mathfrak{B}_t$ is a $\Bbbk$-basis of $A_t.$
We will list the monomials of $\mathfrak{B}_t$ in the decreasing order with respect to the reverse lexicographic order  with $x_1>x_2>\cdots >x_n$, i.e.,
$$\mathfrak{B}_t=\{x_{i_1}x_{i_2}\ldots x_{i_t}\mid 1\leq i_1<i_2<\cdots<i_t\leq n\}.$$
For example

\vskip 4mm
$ \mathfrak{B}_1=\{x_1,x_2,\ldots,x_n\},$

$
\mathfrak{B}_2=\{x_1x_2,x_1x_3,x_2x_3,\ldots,x_1x_n,x_2x_n,x_3x_n,\ldots,x_{n-1}x_n\} \text{ and }$

$
\mathfrak{B}_3=\{x_1x_2x_3,x_1x_2x_4,x_1x_3x_4,\ldots,x_{n-3}x_{n-2}x_{n-1},x_1x_2x_n,x_1x_3x_n,\ldots,x_{n-2}x_{n-1}x_n\}.$

\vskip 4mm

Denote by $M_{m\times n}(\Bbbk)$ the set of all $m\times n$ matrices with entries in the field $\Bbbk$. For any matrix $M\in M_{m\times n}(\Bbbk)$, it is known that $\rank(M)\leq \min\{m,n\}$. We say that $M$ has \textit{ maximal rank } if $\rank(M)=\min \{m,n\}$.

Now we fix a general linear form $\ell=a_1x_1+a_2x_2+\cdots+a_nx_n$ of $R.$ Consider the multiplication  $\times \ell^t: A_i\longrightarrow A_{i+t}$ for some $0\leq i\leq n$ and $0\leq t\leq n-i.$
Let $M_i^t$ be the matrix of $\times \ell^t$ with respect to the two bases $\mathfrak{B}_{i}$ and $\mathfrak{B}_{i+t}$. Thus $\times \ell^t:  A_{i}\longrightarrow A_{i+t}$ has maximal rank if and only if $M_i^t$ has maximal rank. When $t=1$  we will denote by $M_i$ instead of $M_i^1$. Note that when $t=0$,  $M_i^t$  is the identity matrix.
\begin{pro} \label{Proposition3.3}
With the above notations. Then the following assertions are equivalent:
\begin{enumerate}
	\item[\rm (i)] $A$ has the strong Lefschetz property.
	\item[\rm (ii)]  $M_i^t$ has maximal rank for all $0\leq i\leq n$ and $0\leq t\leq n-i$.
	\item[\rm (iii)]  $M_i^{n-2i}$ has maximal rank for all $0\leq i<\frac{n}{2}.$
\end{enumerate}
\end{pro}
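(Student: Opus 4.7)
The plan is to establish the circle of implications $(ii) \Rightarrow (i)$, $(ii) \Rightarrow (iii)$, $(i) \Rightarrow (ii)$, and $(iii) \Rightarrow (ii)$. The implication $(ii) \Rightarrow (i)$ is immediate from the definitions: by construction, $\ell$ is a strong Lefschetz element of $A$ precisely when every $M_i^t$ attains maximal rank. The implication $(ii) \Rightarrow (iii)$ is trivial as well, since $(iii)$ is simply the restriction of $(ii)$ to the particular exponents $t = n - 2i$ with $i < n/2$, for which $0 \le t \le n - i$ is automatic.

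For $(i) \Rightarrow (ii)$, I would invoke the openness of the Lefschetz locus. Treating the coefficients $a_1, \dots, a_n$ of $\ell$ as parameters, each entry of $M_i^t$ is a polynomial in $\Bbbk[a_1,\dots,a_n]$, and the condition ``$M_i^t$ has maximal rank'' is the non-vanishing of at least one $r \times r$ minor, where $r = \min\{h_A(i), h_A(i+t)\}$. Hence the set of linear forms $\ell$ for which every $M_i^t$ has maximal rank is a finite intersection of Zariski-open subsets of $A_1 \cong \Bbbk^n$. Under $(i)$ this open set is non-empty, so it contains any sufficiently general choice of coefficients, and our fixed general $\ell$ satisfies $(ii)$.

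The substantive implication is $(iii) \Rightarrow (ii)$, and it rests on the symmetry $h_A(j) = h_A(n-j) = \binom{n}{j}$ together with the unimodality of $\underline{h}_A$. Under $(iii)$, for every $j < n/2$ the map $\times \ell^{n - 2j}: A_j \to A_{n - j}$ connects spaces of the same dimension, so ``maximal rank'' forces it to be a bijection. My strategy is to realise every multiplication $\times \ell^t: A_i \to A_{i+t}$ as a factor of one of these middle bijections. If $t \le n - 2i$, then necessarily $i \le n/2$, and the factorisation
\[
A_i \xrightarrow{\,\times \ell^t\,} A_{i+t} \xrightarrow{\,\times \ell^{n - 2i - t}\,} A_{n - i}
\]
recovers the bijection $\times \ell^{n - 2i}$, forcing $\times \ell^t$ to be injective; this is maximal rank since $h_A(i) \le h_A(i+t)$ in this range. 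If instead $t > n - 2i$, I set $j := n - i - t$ and verify from $0 \le t \le n - i$ that $0 \le j < \min\{i, n/2\}$; then the bijection $\times \ell^{n - 2j}: A_j \to A_{n - j} = A_{i+t}$ admits the factorisation
\[
A_j \xrightarrow{\,\times \ell^{i - j}\,} A_i \xrightarrow{\,\times \ell^t\,} A_{i+t},
\]
forcing $\times \ell^t$ to be surjective, which again is maximal rank because $h_A(i) \ge h_A(i+t)$. The only real obstacle is the bookkeeping placing $j$ in the admissible range for $(iii)$, but this is a short elementary check using the explicit formula $h_A(i) = \binom{n}{i}$ and the inequality $0 \le t \le n - i$.
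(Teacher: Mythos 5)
Your proof is correct and follows essentially the same route as the paper, which disposes of (i)$\Leftrightarrow$(ii) as definitional and of (ii)$\Leftrightarrow$(iii) by precisely the factorization-through-the-middle-bijection argument you spell out (composition of the multiplication maps together with $\dim_\Bbbk A_i=\dim_\Bbbk A_{n-i}$ and unimodality); your two cases and the bookkeeping for $j=n-i-t$ check out. The one point worth flagging is your Zariski-openness argument for (i)$\Rightarrow$(ii): it requires $\Bbbk$ to be infinite for a nonempty open subset of $A_1$ to contain a $\Bbbk$-rational ``general'' $\ell$, whereas the results here are also claimed for (possibly finite) fields of large positive characteristic; under the paper's implicit reading that (i) means the fixed $\ell$ is a strong Lefschetz element, this direction is immediate and the issue disappears.
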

\begin{proof}
Clearly, (i) is equivalent to (ii) by the definition. The equivalence of (ii) and (iii) follows from the basic properties of compositions of linear applications and the fact that $\dim_k (A_i)=\dim_k(A_{n-i})$ for all $0\leq i<\frac{n}{2}.$
\end{proof}
Now, set $\overline{R}:=R/(x_n)\cong \Bbbk[x_1,x_2,\ldots,x_{n-1}]$ and denote by $\overline{I}$ the image of $I$ in $\overline{R}.$ Therefore, $ \overline{I}=(x_1^2,x_2^2,\ldots,x_{n-1}^2)$ and  $$\overline{A}:=\overline{R}/\overline{I}=\frac{\Bbbk[x_1,x_2,\ldots,x_{n-1}]}{(x_1^2,x_2^2,\ldots,x_{n-1}^2)}$$
is also an artinian quadratic monomial complete intersection algebra. Denote by $\overline{\mathfrak{B}}_t$ the set of all square-free monomials of degree $t$ in $\overline{R}.$ By Lemma~\ref{Lemma3.2},
$$\overline{\mathfrak{B}}_t=\{x_{i_1}x_{i_2}\ldots x_{i_t}\mid 1\leq i_1<i_2<\cdots<i_t\leq n-1\}$$
is a $\Bbbk$-basis of $\overline{A}_t$.
Note  that
\begin{equation}\label{equation3.1}
\mathfrak{B}_t=\overline{\mathfrak{B}}_t \sqcup \mathfrak{B}_t^\prime,
\end{equation}
where
$$\mathfrak{B}_t^\prime=\{x_{i_1}x_{i_2}\ldots x_{i_{t-1}}x_n\mid 1\leq i_1<i_2<\cdots<i_{t-1}\leq n-1 \}.$$ 
We identify $\mathfrak{B}_t^\prime$ with the set
$$\overline{\mathfrak{B}}_{t-1}= \{x_{i_1}x_{i_2}\ldots x_{i_{t-1}}\mid 1\leq i_1<i_2<\cdots<i_{t-1}\leq n-1 \}.$$

Let $\ell=a_1x_1+a_2x_2+\cdots+a_nx_n$ be a linear form in $R$ and let $\overline{\ell}$ be the image of $\ell$ in $\overline{R}$. We denote by $\overline{M}_i^t$ the matrix of $\times \overline{\ell}^t:\overline{A}_{i}\longrightarrow \overline{A}_{i+t}$ with respect to the  two bases $\overline{\mathfrak{B}}_{i}$ and $\overline{\mathfrak{B}}_{i+t}$.
\begin{Lem} \label{Lemma3.4}
For any  $1\leq i\leq n-1$ and $1\leq t\leq n-i$, the matrix $M_i^t$ of $\times \ell^t: A_{i}\longrightarrow A_{i+t}$  with respect to the two bases $\mathfrak{B}_{i}$ and $\mathfrak{B}_{i+t}$ can be expressed by a $2\times 2$ block matrix of form
$$M_i^t=\bmt{ \overline{M}_i^t & & 0\\\\  a_nt\overline{M}_i^{t-1} &  & \overline{M}_{i-1}^t},$$
 where  $0$ is the zero matrix. 
\end{Lem}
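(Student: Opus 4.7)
The plan is to exploit the decomposition $\ell = \overline{\ell} + a_n x_n$ combined with the relation $x_n^2 = 0$ in $A$. Expanding by the binomial theorem,
$$\ell^t = \sum_{k=0}^{t} \binom{t}{k} a_n^k\, x_n^k\, \overline{\ell}^{\,t-k},$$
and because $x_n^k = 0$ in $A$ for $k \geq 2$, this identity collapses to
$$\ell^t \equiv \overline{\ell}^{\,t} + t\, a_n\, x_n\, \overline{\ell}^{\,t-1} \pmod{I}.$$
This single relation is the engine of the whole proof.

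I would then compute the image of each element of $\mathfrak{B}_i = \overline{\mathfrak{B}}_i \sqcup \mathfrak{B}_i'$ under multiplication by $\ell^t$ and read off the corresponding column of $M_i^t$ with respect to $\mathfrak{B}_{i+t} = \overline{\mathfrak{B}}_{i+t} \sqcup \mathfrak{B}_{i+t}'$. For $m \in \overline{\mathfrak{B}}_i$ (no $x_n$ factor), the first summand $\overline{\ell}^{\,t} m$ involves only $x_1,\ldots,x_{n-1}$ and therefore expands in $\overline{\mathfrak{B}}_{i+t}$, contributing exactly the corresponding column of $\overline{M}_i^t$; the second summand $t a_n x_n \overline{\ell}^{\,t-1} m$ is $x_n$ times an element in the span of $\overline{\mathfrak{B}}_{i+t-1}$, so under the identification $\mathfrak{B}_{i+t}' \leftrightarrow \overline{\mathfrak{B}}_{i+t-1}$ from \eqref{equation3.1} it contributes the column $t a_n$ times that of $\overline{M}_i^{t-1}$. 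This accounts for the left column of blocks.

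For $m = m' x_n \in \mathfrak{B}_i'$ with $m' \in \overline{\mathfrak{B}}_{i-1}$, the second summand now carries a factor of $x_n^2$ and so vanishes, leaving $\ell^t m = (\overline{\ell}^{\,t} m')\, x_n$. Expanding $\overline{\ell}^{\,t} m'$ in $\overline{\mathfrak{B}}_{i+t-1}$ and multiplying by $x_n$ lands in the span of $\mathfrak{B}_{i+t}'$ with coefficients equal to the column of $\overline{M}_{i-1}^t$ associated with $m'$; no component in $\overline{\mathfrak{B}}_{i+t}$ arises, which gives the zero block in the top right. Assembling the four blocks in the order dictated by the ordering of $\mathfrak{B}_i$ and $\mathfrak{B}_{i+t}$ produces the stated $2\times 2$ block form.

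Since the whole argument is a direct calculation, there is no conceptual obstacle; the only delicate point is to respect the conventions for ordering the bases $\mathfrak{B}_i$, $\mathfrak{B}_{i+t}$ and for identifying $\mathfrak{B}_t'$ with $\overline{\mathfrak{B}}_{t-1}$ via $m' x_n \leftrightarrow m'$, so that each contribution is deposited in the correct block.
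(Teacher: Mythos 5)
Your proposal is correct and follows essentially the same route as the paper: both rest on the identity $\ell^t = \overline{\ell}^{\,t} + t a_n \overline{\ell}^{\,t-1} x_n$ in $A$ (all higher binomial terms dying because $x_n^2=0$) together with the basis decomposition $\mathfrak{B}_j = \overline{\mathfrak{B}}_j \sqcup \mathfrak{B}_j'$ and the identification $\mathfrak{B}_j' \equiv \overline{\mathfrak{B}}_{j-1}$. Your column-by-column verification of the four blocks is just a more explicit write-up of the computation the paper leaves to the reader.
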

\begin{proof}
We see immediately that
$$\ell^t= \sum_{j=0}^t \binom{t}{j} (\overline{\ell})^{t-j}(a_nx_n)^j =\overline{\ell}^t+  a_nt(\overline{\ell})^{t-1}x_n$$
in $A$.  The result follows from  the definition of the matrix  of $\times \ell^t: A_i\longrightarrow A_{i+t}$  with respect to the two bases $\mathfrak{B}_i$ and $\mathfrak{B}_{i+t}$ and using the decomposition of the two bases $\mathfrak{B}_{i}$ and $\mathfrak{B}_{i+t}$ as in \eqref{equation3.1} where we  identify
$$\mathfrak{B}_i^\prime\equiv \overline{\mathfrak{B}}_{i-1}\quad \text{and}\quad \mathfrak{B}_{i+t}^\prime\equiv \overline{\mathfrak{B}}_{i+t-1}.$$
We get the desired conclusion. 
\end{proof}
\begin{Exem}
Assume the characteristic of $\Bbbk$ is zero or greater than $4$. Consider $R=\Bbbk[x_1,x_2,x_3,x_4]$, $A=R/{(x_1^2,x_2^2,x_3^2,x_4^2)}$ and the linear form $\ell=x_1+x_2+x_3 +x_4$. Then the matrix of the multiplication $\times \ell^2: A_1\longrightarrow A_3$ with respect to the bases $\mathfrak{B}_1$ and $\mathfrak{B}_3$ is
\[
M_1^2=\left[\begin{array}{c c c|c}
2& 2 & 2 &0 \\\hline 2& 2 & 0 &2 \\2& 0 & 2 &2 \\0& 2 & 2 &2
\end{array}   \right] =\bmt{ \overline{M}_1^2 & & 0\\\\  2\overline{M}_1 &  & \overline{M}_{0}^2}
\]
A straightforward computation shows that $\det(M_1^2)=-2^4.3^2\neq 0 $, hence  the map $\times \ell^2: A_1\longrightarrow A_3$ is an isomorphism.
\end{Exem}

The following lemma is useful to determine the rank of block matrices.
\begin{Lem}\label{Lemma3.5}
Let $A\in M_{m\times n}(\Bbbk), B\in M_{n\times p}(\Bbbk), P\in M_{n\times n}(\Bbbk)$ such that $P$ is nonsingular. Assume that $M$ is a $(m+n)\times (n+p)$ matrix such that $M$ is written in $2\times 2$ block matrix as follows
$$M=\bmt{AP&& 0\\  \\ P &  & PB},$$
Then
$$\rank(M)= n+ \rank(APB).$$
\end{Lem}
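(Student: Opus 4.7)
The plan is to establish the rank formula by reducing $M$ to an anti-diagonal block matrix via invertible block row and column operations, exploiting the fact that both off-diagonal blocks of the anti-diagonal contain the invertible factor $P$ or the desired product $APB$.

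First, I would perform a block row operation to eliminate the top-left block $AP$. Since the block $(2,1)$ entry is precisely $P$, left-multiplying $M$ by the invertible matrix
$$L=\bmt{I_m & -A \\ 0 & I_n}$$
replaces the first block row by (first block row) $-A\cdot$(second block row), giving
$$LM=\bmt{AP-AP & 0-A\cdot PB \\ P & PB}=\bmt{0 & -APB \\ P & PB}.$$
Next I would perform the analogous column operation to eliminate the bottom-right block $PB$. Right-multiplying by the invertible matrix
$$R=\bmt{I_n & -B \\ 0 & I_p}$$
replaces the second block column by (second block column) $-$ (first block column)$\cdot B$, yielding
$$LMR=\bmt{0 & -APB \\ P & PB-PB}=\bmt{0 & -APB \\ P & 0}.$$
Since $L$ and $R$ are invertible, $\rank(M)=\rank(LMR)$.

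Finally, I would invoke the standard fact that for an anti-diagonal block matrix
$$\bmt{0 & X \\ Y & 0},$$
the rank equals $\rank(X)+\rank(Y)$; this follows immediately by permuting the block rows (or columns) to reach a block-diagonal form. Applying this with $X=-APB$ and $Y=P$, and using that $P$ is an invertible $n\times n$ matrix so $\rank(P)=n$, I obtain
$$\rank(M)=\rank(P)+\rank(-APB)=n+\rank(APB),$$
as desired.

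None of the steps present a real obstacle: the proof is a direct Gaussian-elimination argument at the block level. The only point that requires a moment of care is ensuring the block operations have compatible dimensions (verifying that $L$ is $(m+n)\times(m+n)$ and $R$ is $(n+p)\times(n+p)$, so that $LMR$ has the same shape as $M$) and that the two invertible factors $L$, $R$ do not alter the rank. Otherwise this is a purely formal manipulation.
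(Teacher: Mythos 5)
Your proof is correct and follows essentially the same route as the paper: both reduce $M$ to an anti-diagonal block form by invertible block row and column operations and then read off the rank. The only cosmetic difference is that the paper's left multiplier uses $P^{-1}$ in its lower-right block, normalizing the anti-diagonal entry to $I_n$ rather than $P$, which changes nothing since $\rank(P)=n$.
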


\begin{proof}
We have that
$$\bmt{I_m&& -A\\  \\ 0 &  & P^{-1}} \times \bmt{AP&& 0\\  \\ P &  & PB}\bmt{I_n&& B\\  \\ 0 &  & -I_p}=\bmt{0&& APB\\  \\ I_n&  & 0},$$
and conclude immediately.
\end{proof}

Now we prove the main result in this section.
\begin{Theo}\label{Theorem3.7}
Assume that the characteristic of $\Bbbk$ is zero or greater than $n$. Then the algebra $A=R/(x_1^2,x_2^2,\ldots,x_n^2)$ has the strong Lefschetz property  with $\ell=x_1+x_2+\cdots+x_n$ as a strong Lefschetz element.
\end{Theo}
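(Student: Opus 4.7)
The plan is to proceed by induction on $n$, using Proposition~\ref{Proposition3.3}(iii) to reduce the strong Lefschetz property to showing that the square matrix $M_i^{n-2i}$ is invertible for every $0\le i<n/2$. The base case $n=1$ is immediate. In the inductive step I assume the theorem is known for $\overline A = \Bbbk[x_1,\ldots,x_{n-1}]/(x_1^2,\ldots,x_{n-1}^2)$ with strong Lefschetz element $\overline\ell = x_1+\cdots+x_{n-1}$; by Proposition~\ref{Proposition3.3} this means every $\overline M_j^s$ has maximal rank.

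The boundary case $i=0$ can be checked by hand. Expanding $\ell^n$ by the multinomial theorem and reducing modulo $(x_1^2,\ldots,x_n^2)$ kills every term with a repeated variable, leaving $\ell^n = n!\,x_1x_2\cdots x_n$ in $A_n$. Since $\mathrm{char}\,\Bbbk = 0$ or $\mathrm{char}\,\Bbbk > n$, the factor $n!$ is nonzero, and $M_0^n$ has rank $1 = \dim A_n$.

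For $1\le i<n/2$, set $t=n-2i$. By Lemma~\ref{Lemma3.4} (with $a_n=1$),
$$M_i^t = \bmt{\overline M_i^t & & 0 \\\\ t\,\overline M_i^{t-1} & & \overline M_{i-1}^t}.$$
The pivotal observation is that $P := t\,\overline M_i^{t-1}$ is a square $\binom{n-1}{i}\times\binom{n-1}{i}$ matrix that is invertible: the exponent $t-1=(n-1)-2i$ is precisely the middle exponent for $\overline A$ starting at degree $i$, so the inductive hypothesis makes $\overline M_i^{t-1}$ invertible; and $0<t=n-2i\le n-2<n$ is a nonzero scalar of $\Bbbk$ under our characteristic assumption. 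Writing $\overline M_i^t = AP$ and $\overline M_{i-1}^t = PB$ for the unique matrices $A$ and $B$, Lemma~\ref{Lemma3.5} gives
$$\rank(M_i^t) = \binom{n-1}{i} + \rank(APB).$$

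The remaining task, and the main obstacle, is to identify $APB$. Factoring multiplication by $\overline\ell^t$ as multiplication by $\overline\ell$ after multiplication by $\overline\ell^{t-1}$ yields the matrix identity $\overline M_i^t = \overline M_{i+t-1}^1\,\overline M_i^{t-1}$, and the same trick gives $\overline M_{i+t-1}^1\,\overline M_{i-1}^t = \overline M_{i-1}^{t+1}$. Hence
$$APB = \overline M_i^t\,P^{-1}\,\overline M_{i-1}^t = \tfrac{1}{t}\,\overline M_{i+t-1}^1\,\overline M_{i-1}^t = \tfrac{1}{t}\,\overline M_{i-1}^{t+1}.$$
Now $t+1=(n-1)-2(i-1)$ is exactly the middle exponent for $\overline A$ starting at degree $i-1$, and $\dim\overline A_{i-1}=\dim\overline A_{n-i}=\binom{n-1}{i-1}$, so by the inductive hypothesis $\overline M_{i-1}^{t+1}$ is an invertible square matrix. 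Therefore $\rank(M_i^t) = \binom{n-1}{i} + \binom{n-1}{i-1} = \binom{n}{i}$, which is full rank, and the induction closes. The entire proof thus reduces to this telescoping identification of $APB$ with a middle map of $\overline A$; once it is in place, the characteristic hypothesis intervenes only to guarantee that $n!\ne 0$ in the $i=0$ case and that $n-2i\ne 0$ for $1\le i<n/2$, both of which follow from $\mathrm{char}\,\Bbbk=0$ or $\mathrm{char}\,\Bbbk>n$.
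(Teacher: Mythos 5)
Your proposal is correct and follows essentially the same route as the paper: induction on $n$, the block decomposition of Lemma~\ref{Lemma3.4}, invertibility of $\overline M_i^{\,n-2i-1}$ from the inductive hypothesis, and Lemma~\ref{Lemma3.5} to identify $APB$ (up to the nonzero scalar $n-2i$) with the middle map $\times\overline\ell^{\,n-2i+1}:\overline A_{i-1}\to\overline A_{n-i}$. The only cosmetic difference is that the paper writes the factorizations $\overline M_i^{\,n-2i}=\overline M_{n-i-1}\overline M_i^{\,n-2i-1}$ and $\overline M_{i-1}^{\,n-2i}=\overline M_i^{\,n-2i-1}\overline M_{i-1}$ explicitly rather than solving for $A$ and $B$ via $P^{-1}$, and it lists $n=1,2$ as base cases.
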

\begin{proof}
Set $\ell=x_1+x_2+\cdots+x_n$. Note first that since $\dim_\Bbbk(A_0)=\dim_\Bbbk(A_n)=1$, the matrix  of the map $\times\ell^n: A_{0}\longrightarrow A_{n}$ is $M_0^n=\pmt{n!}$. Therefore, $\det(M_0^n)=n!\ne 0$ since  the characteristic of $\Bbbk$ is zero or greater than $n$. 

Now to prove the theorem, by Proposition~\ref{Proposition3.3}, it is enough to show that the matrix $M_i^{n-2i}$  of $\times \ell^{n-2i}: A_{i}\longrightarrow A_{n-i}$ has maximal rank for all $0\leq i<\frac{n}{2}$. We show the assertion by induction on $n$.

Firstly, for $n=1,2$. We only have the case $i=0$. However, in this case the assertion holds as we have remarked at the beginning of the proof.
Now we assume that the assertion holds for any artinian quadratic monomial complete intersection  algebra in the polynomial ring in $<n$ variables.  For integers $n\ge 3$ and $i$ satisfying  $0< i<\frac{n}{2}$, we have to show that  the matrix $M_i^{n-2i}$  of $\times \ell^{n-2i}: A_{i}\longrightarrow A_{n-i}$ has maximal rank. 
By Lemma~\ref{Lemma3.4}, $M_i^{n-2i}$ can be written in the form of $2\times 2$ block matrix
$$M_i^{n-2i}=\bmt{ \overline{M}_i^{n-2i} & & 0\\\\ (n-2i) \overline{M}_i^{n-2i-1} &  & \overline{M}_{i-1}^{n-2i}}.$$
 Note that $0<n-2i<n$, hence $n-2i$ is an invertible element of $\Bbbk$. Observe that   $M_i^{n-2i}$ and $\overline{M}_i^{n-2i-1}$  are  square matrices of size $\binom{n}{i} \times \binom{n}{i}$ and $\binom{n-1}{i} \times \binom{n-1}{i}$, respectively. Moreover,
$$\overline{M}_i^{n-2i}=\overline{M}_{n-i-1} \overline{M}_i^{n-2i-1}\quad \text{and}\quad \overline{M}_{i-1}^{n-2i}= \overline{M}_i^{n-2i-1} \overline{M}_{i-1}.$$

As $\overline{A}=\overline{R}/\overline{I}$ has the strong Lefschetz property by the inductive hypothesis, $\overline{M}_i^{n-2i-1}$ is a nonsingular matrix. It follows from Lemma~\ref{Lemma3.5} that
$$\rank(M_i^{n-2i})=\binom{n-1}{i} +\rank(\overline{M}_{n-i-1} \overline{M}_i^{n-2i-1}\overline{M}_{i-1}   ).$$

We observe that $\overline{M}_{n-i-1} \overline{M}_i^{n-2i-1}\overline{M}_{i-1}$ is the matrix of $$\times(\overline{\ell})^{n-2i+1}: \overline{A}_{i-1}\longrightarrow \overline{A}_{n-i}.$$
It is an isomorphism since  $\overline{A}$ has the strong Lefschetz property. Hence
$$\rank(\overline{M}_{n-i-1} \overline{M}_i^{n-2i-1}\overline{M}_{i-1})=\binom{n-1}{i-1} .$$
It follows that 
$$\rank(M_i^{n-2i})=\binom{n-1}{i} +\binom{n-1}{i-1}=\binom{n}{i}.$$
This implies that $\ell^{n-2i}: A_i\longrightarrow A_{n-i}$ is an isomorphism. This finishes the induction and the proof.
\end{proof}
\section{Proof of Stanley' theorem}
Finally, we show that Stanley's theorem follows from Theorem~\ref{Theorem3.7}.
\begin{Theo}\label{Theorem3.1}
Let $\Bbbk$ be a field and $R=\Bbbk[x_1,x_2,\ldots,x_n]$. Then  $$R/(x_1^{d_1},x_2^{d_2},\ldots,x_n^{d_n})$$
has the strong Lefschetz property if the characteristic of $\Bbbk$ is zero or greater than $d_1+d_2+\cdots+d_n-n$.
\end{Theo}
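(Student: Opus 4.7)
The strategy, as outlined in the introduction, is to realize $A=R/(x_1^{d_1},\ldots,x_n^{d_n})$ as a graded $\Bbbk$-subalgebra of an artinian quadratic monomial complete intersection $B$ of the same socle degree as $A$, and then transfer the strong Lefschetz property from $B$ (given by Theorem~\ref{Theorem3.7}) to $A$.

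First I would assume without loss of generality that $d_i\ge 2$ for all $i$ (if $d_i=1$, then $x_i=0$ in $A$ and one can simply discard that variable). Set $N:=\sum_{i=1}^n (d_i-1)$, which is precisely the socle degree of $A$. Introduce auxiliary variables $y_{i,j}$ for $1\le i\le n$ and $1\le j\le d_i-1$, and put
\[ S:=\Bbbk[y_{i,j}], \qquad B:=S/(y_{i,j}^2 \mid 1\le i\le n,\; 1\le j\le d_i-1). \]
Then $B$ is a quadratic monomial complete intersection in exactly $N$ variables with socle degree $N$, and the hypothesis on $\operatorname{char}\Bbbk$ is exactly the one required by Theorem~\ref{Theorem3.7}, so $B$ has the SLP with $\ell_B:=\sum_{i,j} y_{i,j}$ as a strong Lefschetz element. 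The decision to take $d_i-1$ (rather than $d_i$) auxiliary variables per block is precisely what forces the socle degrees of $A$ and $B$ to coincide.

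Next, define the graded $\Bbbk$-algebra homomorphism $\phi:R\to B$ by $\phi(x_i):=z_i:=\sum_{j=1}^{d_i-1} y_{i,j}$. Since any degree-$d_i$ monomial in only $d_i-1$ square-free variables must repeat some variable, $z_i^{d_i}=0$ in $B$, so $\phi$ descends to a graded $\Bbbk$-algebra homomorphism $\bar\phi:A\to B$ sending $\ell_A:=x_1+\cdots+x_n$ to $\ell_B$. The key claim, which I expect to be the main obstacle, is that $\bar\phi$ is \emph{injective}. I would verify this using the $\mathbb{Z}^n$-multigrading on $B$ with $\deg(y_{i,j})=e_i$: the image $\bar\phi(x_1^{a_1}\cdots x_n^{a_n})=z_1^{a_1}\cdots z_n^{a_n}$ is homogeneous of multidegree $(a_1,\ldots,a_n)$, and expanding via $y_{i,j}^2=0$ gives $z_i^{a_i}=a_i!\cdot e_{a_i}(y_{i,1},\ldots,y_{i,d_i-1})$, which is nonzero in $B$ for $0\le a_i\le d_i-1$ (since $a_i!\ne 0$ by the characteristic hypothesis, and the elementary symmetric polynomial $e_{a_i}$ is a nonzero sum of distinct square-free monomials). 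Because the factors $z_i^{a_i}$ involve pairwise disjoint sets of variables, their product is nonzero; and because different exponent vectors $(a_1,\ldots,a_n)$ land in different multidegree components, the images of the standard monomial $\Bbbk$-basis of $A$ are linearly independent in $B$.

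Finally I would transfer the SLP. Because $\bar\phi$ is an algebra homomorphism, $\bar\phi(\ell_A^t f)=\ell_B^t\,\bar\phi(f)$ for every $f\in A_i$. For each pair $(i,t)$ with $2i+t\le N$, Theorem~\ref{Theorem3.7} gives that $\times\ell_B^t:B_i\to B_{i+t}$ is injective, and combining this with injectivity of $\bar\phi$ yields injectivity of $\times\ell_A^t:A_i\to A_{i+t}$. To cover the complementary range $2i+t\ge N$, I would invoke that $A$ is a graded artinian complete intersection, hence Gorenstein with symmetric and unimodal Hilbert function of socle degree $N$; the perfect pairing $A_j\times A_{N-j}\to A_N\cong\Bbbk$ identifies the transpose of $\times\ell_A^t:A_i\to A_{i+t}$ with $\times\ell_A^t:A_{N-i-t}\to A_{N-i}$. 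Consequently, the already-established injectivity in the low-degree range is equivalent to surjectivity in the complementary high-degree range, and the two halves together give maximal rank in all bidegrees. Therefore $A$ has the SLP with $\ell_A$ as a strong Lefschetz element, which is the statement of the theorem.
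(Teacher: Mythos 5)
Your proposal is correct and follows the same overall strategy as the paper: embed $A$ into the quadratic monomial complete intersection $B$ in $N=\sum_i(d_i-1)$ variables via $x_i\mapsto z_i=\sum_j y_{i,j}$ (block sums over disjoint sets of $d_i-1$ variables, which is exactly the paper's map $\phi$), apply Theorem~\ref{Theorem3.7} to $B$, and pull the property back along the inclusion. The two places where you genuinely diverge are worth noting. For the injectivity of $A\to B$, the paper identifies $\Ker(\phi)$ with $(y_1^{d_1},\ldots,y_n^{d_n})$ by a socle argument: a homogeneous element of $\Ker(\phi)$ of maximal degree not in that ideal must represent a nonzero socle class $c\,y_1^{d_1-1}\cdots y_n^{d_n-1}$, whose image $c\,(d_1-1)!\cdots(d_n-1)!\,x_1\cdots x_N$ is nonzero --- so only one element needs to be checked. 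You instead check injectivity on the whole monomial basis using the $\mathbb{Z}^n$-multigrading and the identity $z_i^{a_i}=a_i!\,e_{a_i}(y_{i,1},\ldots,y_{i,d_i-1})$; this is more computational but avoids invoking the structure of the socle of $S/J$, and both routes hinge on the same nonvanishing of $a_1!\cdots a_n!$ guaranteed by the characteristic hypothesis. For the endgame, the paper only verifies that the extremal maps $\times\ell^{N-2i}:A_i\to A_{N-i}$ are bijective (injectivity from the commutative diagram plus $\dim_\Bbbk A_i=\dim_\Bbbk A_{N-i}$), and then concludes SLP by the same factoring argument as in Proposition~\ref{Proposition3.3}; your use of Gorenstein duality to convert injectivity in low bidegrees into surjectivity in high bidegrees is valid but imports more machinery than the paper's purely dimension-count argument, somewhat against the paper's stated aim of using only basic linear algebra. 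Neither difference affects correctness.
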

\begin{proof}
To simplicity, denote $A=R/(x_1^{a_1+1},x_2^{a_2+1},\ldots,x_n^{a_n+1})$, with $a_1,a_2,\ldots,a_n$ are the positive integers.  Assume that the characteristic of $\Bbbk$ is zero or greater than $a_1+a_2+\cdots+a_n$. We need to show that $A$ has the strong Lefschetz property.

Note that $A$ is an artinian monomial complete intersection algebra with the socle degree $m=a_1+a_2+\cdots+a_n$. Set $\alpha_i=\sum_{j=1}^i a_j$ for $i=1,\ldots,n$. Now we let $B~=~\Bbbk[x_1,x_2,\ldots,x_m]/(x_1^2,x_2^2,\ldots,x_m^2)$. By Theorem~\ref{Theorem3.7}, the algebra $B$ has the strong Lefschetz property with $\ell=x_1+x_2+\cdots+ x_m$ as a strong Lefschetz element.  Set $S:=\Bbbk[y_1,y_2,\ldots,y_n]$ and consider the algebra homomorphism $\phi: S\longrightarrow B$ given by
\begin{align*}
y_1&\longmapsto x_1+\cdots+x_{\alpha_1}\\
y_2&\longmapsto x_{\alpha_1+1}+\cdots+x_{\alpha_2}\\
&\cdots\\
y_n&\longmapsto x_{\alpha_{n-1}+1}+\cdots+x_{\alpha_n}.
\end{align*}
Set $J=(y_1^{a_1+1},y_2^{a_2+1},\ldots,y_n^{a_n+1})$. We have the following assertion.

\noindent\textsc{Claim:} $\Ker(\phi)=J$. 

\noindent{\it Proof of Claim:} First, for each $j=1,2,\ldots,n$, we see that $$\phi(y_j^{a_j+1})=(\underbrace{x_{\alpha_{j-1}+1}+\cdots+x_{\alpha_j}}_{a_j\;\text{variables }})^{a_j+1}=0$$
in $B$.  In other words, $y_j^{a_j+1}\in \Ker(\phi)$, so $J\subset \Ker(\phi)$. Assume the contrary that $\Ker(\phi)\ne J$. Select a homogeneous element $f$ of largest degree with $f\in \Ker(\phi)$ and $f\notin J$. It follows that $f$ represents a non-trivial element in the socle of $S/J$. Note that $S/J$ is an artinian monomial complete intersection algebra and its socle is a $\Bbbk$-vector space  spanned by $y_1^{a_1}\ldots y_n^{a_n}$. Hence there exists a non-zero element $ c\in \Bbbk$ such that $f=cy_1^{a_1}\ldots y_n^{a_n}+g$, where $g\in J$. Thus, $\phi(g)=0$ and
$$\phi(f)= c(x_1+\cdots+x_{\alpha_1})^{a_1}\ldots (x_{\alpha_{n-1}+1}+\cdots+x_{\alpha_n})^{a_n}=ca_1!\ldots a_n! x_1x_2\ldots x_m.$$
Note that the characteristic of $\Bbbk$ is zero or greater than $m=a_1+a_2+\cdots+a_n$, hence $ca_1!\ldots a_n!$ is an invertible element of $\Bbbk$. Thus $\phi(f)=ca_1!\ldots a_n! x_1x_2\ldots x_m \ne 0$ in $B$. This  contradicts  $f\in \Ker(\phi).$ The contradiction completes the proof of Claim.

By the above claim, we get the  following algebra isomorphisms
\[
A\simeq S/\Ker(\phi)\simeq \Image(\phi) \subseteq B.
\]
It follows that we can identify  $A$ with a sub-algebra of $B$. Furthermore, $A$ and $B$ have the same socle degree, namely $m$.  We have the commutative diagrams
\begin{align}\label{Commutativediagram}
\xymatrix{ A_0\ar[r]^{\times \ell}\ar@{_{(}->}[d] &A_1   \ar[r]^{\times \ell} \ar@{_{(}->}[d]&A_2 \ar[r]^{\times \ell} \ar@{_{(}->}[d]&\cdots \ar[r]^{\times \ell}\ar@{_{(}->}[d] &   A_{m-2}\ar[r]^{\times \ell}\ar@{_{(}->}[d]&A_{m-1}\ar[r]^{\times \ell}\ar@{_{(}->}[d]&A_m \ar@{_{(}->}[d]  \\
B_0\ar[r]^{\times \ell} &B_1   \ar[r]^{\times \ell} &B_2 \ar[r]^{\times \ell} &\cdots \ar[r]^{\times \ell} &   B_{m-2}\ar[r]^{\times \ell}&B_{m-1}\ar[r]^{\times \ell}&B_m,  }
\end{align} 
where the vertical maps are the canonical inclusions and $\ell=x_1+x_2+\cdots+ x_m$. Since $B$ has the strong Lefschetz property,  the maps $\times \ell^{m-2i}: B_i\longrightarrow B_{m-i}$ is bijective, for all $0\le i<\frac{m}{2}$. By \eqref{Commutativediagram}, we get that  $\times \ell^{m-2i}: A_i\longrightarrow A_{m-i}$ is injective. Since $\dim_\Bbbk A_i = \dim_\Bbbk A_{m-i}$, it implies that $\times \ell^{m-2i}: A_i\longrightarrow A_{m-i}$ is bijective, for all $0\le i<\frac{m}{2}$. Thus $A$ has the strong Lefschetz property. The proof is completed.
\end{proof}

\section*{Acknowledgments}
This work  is  supported by the Vietnam Ministry of Education and Training under grant number B2022-DHH-01. The second author is also partially supported by the Core Research Program of Hue University under grant number NCM.DHH.2020.15. Part of this paper was written while the second author visited the Vietnam Institute for Advanced Study in Mathematics (VIASM), he would like to thank VIASM for the very kind support and hospitality. The authors are grateful to the anonymous referee for his/her careful reading of the manuscript and many useful suggestions.


\end{document}